\documentclass[11pt]{article}
\usepackage{dsfont}%打 R^n (表示N维R空间)
\usepackage{graphicx}
\usepackage{hyperref}
\usepackage[numbers,sort&compress]{natbib}
\usepackage{hypernat}
\usepackage{mathrsfs}
\usepackage{amsmath,amsthm,amssymb,bm}

\usepackage{indentfirst}
\usepackage[a4paper,top=2.5cm,left=2.5cm,right=2.5cm,bottom=2.5cm]{geometry}
% THEOREMS -------------------------------------------------------
\newtheorem{thm}{\hspace{4mm} Theorem}%[section]

\newtheorem{rem}{\hspace{4mm} Remark}

%\numberwithin{equation}{section}
\begin{document}

\title{A decomposition method to construct cubature formulae of degree 3\thanks{The project is
supported by NNSF of China(Nos. 61033012,11171052,11271060,61272371)
and also supported by ``the Fundamental Research Funds for the
Central Universities''.}}
\author{Zhaoliang Meng\thanks{Corresponding author. E-mail: m\_zh\_l@yahoo.com.cn}
\\[4pt]
School of Mathematical Sciences, Dalian University of Technology,
Dalian, 116024, China\\[5pt]
Zhongxuan Luo\\[4pt]
School of Mathematical Sciences, Dalian University of Technology, Dalian, 116024, China\\
School of Software, Dalian University of Technology, Dalian, 116620, China}
\date{}
 \maketitle
%\vspace{-1cm}
%\begin{center}
%Department of Mathematics, Dalian University of Technology, Dalian,
%116024, China
%\end{center}
%\vspace{.5cm}
\begin{abstract}
Numerical integration formulas in  $n$-dimensional Euclidean space
of degree three are discussed. For the integrals with permutation
symmetry we present a method to construct its third-degree
integration formulas with $2n$ real points. We present a
decomposition method and only need to deal with $n$ one-dimensional
moment problems independently.
\end{abstract}
\begin{quote}
\textbf{Keywords:} Numerical integration; Degree three; Cubature formulae; Decomposition
method; One-dimensional moment problem
\end{quote}
\section{Introduction}
Let $\Pi^n=\mathbb{R}[x_1,\ldots,x_n]$ be the space of polynomials
in $n$ real variables and $\mathscr{L}$ be a square positive linear
functional defined on $\Pi^n$ such as those given by
$\mathscr{L}(f)=\int_{\mathbb{R}^n}f(x)W(x)\mathrm{d}x$, where $W$
is a nonnegative weight function with finite moments of all order. Let $\Pi^n_d$ be the space of polynomials of degree at most $d$.
Here we discuss numerical integration formulas of the form
\begin{eqnarray}\label{Eq:int}
\mathscr{L}(f)\thickapprox \sum_{k}a_kf(u^{(k)}),
\end{eqnarray}
where $a_k$ are constants and $u^{(k)}$ are points in the spaces.
The formulas are called degree of $d$ if they are exact for
integrations of any polynomials of $x$ of degree at most $d$ but not
$d+1$.

In this paper, we only deal with the construction of third-degree cubature
formula. It
looks like a simple problem, however it remains to be solved. The
well known result is due to Stroud
\cite{stroud1957,stroud1960,stroud1971}. He presented a method to
construct numerical integration formulas of degree 3 for centrally
symmetrical region and recently Xiu \cite{xiu} also considered the
similar numerical formulas for integrals as
\begin{eqnarray*}
\mathscr{L}(f)=\int_{a}^b\ldots\int_a^bw(x_1)\ldots,w(x_n)f(x_1,x_2\ldots,x_n)\mathrm{d}x_1\ldots\mathrm{d}x_n.
\end{eqnarray*}
In \cite{xiu}, Xiu assumed that every single integral is symmetrical, which means his result naturally belongs to centrally
symmetrical case. Recently,  the authors \cite{meng} extended Stroud's results and
presented formulas of degree 3 of $2n$ points or $2n+1$ points for
integrals as
\begin{eqnarray*}
 && \mathscr{L}(f)=\int_{a_1}^{b_1}\ldots\int_{a_n}^{b_n}w_1(x_1)\ldots
  w_n(x_n)f(x_1,\ldots,x_n)\ \mathrm{d}x_1\ldots \mathrm{d}x_n,\\
&&  w_i(x_i)\geq 0,\quad  x_i\in [a_i,b_i],\quad i=1,\ldots,n.\notag
\end{eqnarray*}
Besides, many scholars employed the invariant theory method to deal
with symmetrical case and we can refer to
\cite{cools2001,cools2002,cools2003} and the references therein. As
far as we know, $2n$ is the minimum of the integration points except
some special regions(see \cite{stroud1961}), and for centrally
symmetrical region Mysovskikh \cite{mysovskikh} had shown this
point. However, for the general integration case, it remains unknown
how to construct the formulas of degree 3 with $2n$ points. In the
two-dimensional case, third degree integration formulas with 4 real
points was given in \cite{gunther,luo} for any regions. But it is
difficult to extend it to higher dimension. For other related work,
we can refer to \cite{Om,meng2,erich} and the reference therein.

This paper will extend the results in \cite{meng} for the integrals of product regions to
those with permutation symmetry.  First we present a
condition which is satisfied by the integral. And then we prove that under
this condition, the construction problem of cubature formulae with degree
three can be transformed into two smaller sub-cubature problems. Finally, for
the construction of cubature rules of the integrals with permutation symmetry
can be decomposed into $n$ one-dimensional moment problems.

This paper is organized as follows. The construction of cubature
formulas of degree 3 are presented in section 2. And section 3 will
present two examples to illustrate the construction process.
Finally, section 4 will make a conclusion.
\section{The construction of third-degree formulas}
Assume that $\mathscr{L}$ has the following property: \vspace{6pt}

\begin{minipage}[c]{0.1\textwidth}
    (P)
\end{minipage}
\begin{minipage}[c]{0.8\textwidth}
%\begin{quote}
 \emph{There exist $n$ linearly independent polynomial
  $l_i(x_1,\ldots,x_n),i=1,2,\ldots,n$ such that all $l_il_n(i=1,\ldots,n-1)$
  are the orthogonal polynomials of degree two with respect to
  $\mathscr{L}$.}
%\end{quote}
\end{minipage}

Let
\begin{eqnarray}\label{eq:transformation}
T:\quad l_i(x_1,\ldots,x_n)\longrightarrow t_i, \quad i=1,2,\ldots,n
\end{eqnarray}
be a linear transformation and $\mathscr{L}$ be transformed into $\mathscr{L}'$. Then by
the assumption all $t_it_n(i=1,2\ldots,n-1)$ are the orthogonal polynomial of degree two
with respect to $\mathscr{L}'$. Here we do not require that all $t_it_n(i=1,2\ldots,n-1)$
can constitute a basis of orthogonal polynomials of degree 2 with respect to
$\mathscr{L}'$. We can also assume that the third-degree formula of $\mathscr{L}'$ has
the following form
\begin{eqnarray}\label{eq:cub_form}
\begin{split}
 &v^{(1)}=(v_{1,1},v_{1,2},v_{1,3}\ldots,v_{1,n-1},0)\quad \omega_{1}\\
 &v^{(2)}=(v_{2,1},v_{2,2},v_{2,3}\ldots,v_{2,n-1},0)\quad \omega_{2}\\
  &\cdots \cdots \cdots\cdots \cdots \cdots \cdots \cdots \cdots\cdots \\
  &v^{(N)}=(v_{N,1},v_{N,2},v_{N,3}\ldots,v_{N,n-1},0)\quad \omega_{N}\\
  &v^{(N+j)}=(0,0,0\ldots,0,v_{N+j,n})\quad \omega_{N+j},\ j=1,2.
 \end{split}
\end{eqnarray}
To enforce polynomial exactness of degree 3, it suffices to require
\eqref{eq:cub_form} to be exact for
  $$
1, \ t_1,\ t_2,\ldots,t_n,\ t_it_j,\ t_it_jt_k\quad
i,j,k=1,2,\ldots,n.
  $$
Then we have
\begin{subequations}\label{eq:moment_equation}
\begin{eqnarray}
&&\quad \omega_1+\omega_2+\ldots+\omega_{N+2}=\mathscr{L}'(1) \label{eq:moment_equation_constant}\\
&&\left\{
  \begin{array}{ll}
\omega_1v_{1,i}+\omega_2v_{2,i}+\ldots+\omega_{N}v_{N,i}=\mathscr{L}'(t_i),\quad i=1,2,\ldots,n-1 \\
\omega_{N+1}v_{N+1,n}+\omega_{N+2}v_{N+2,n}=\mathscr{L}'(t_n)
\end{array}
\right. \\
&&\left\{
  \begin{array}{ll}
\omega_1v_{1,i}v_{1,j}+\omega_2v_{2,i}v_{2,j}+\ldots+\omega_{N}v_{N,i}v_{N,j}=\mathscr{L}'(t_it_j),\quad i,j=1,2,\ldots,n-1 \\
 \omega_{N+1}v_{N+1,n}^2+\omega_{N+2}v_{N+2,n}^2=\mathscr{L}'(t_n^2)
\end{array}
\right. \\
 &&\left\{
  \begin{array}{ll}
\omega_1v_{1,i}v_{1,j}v_{1,k}+\ldots+\omega_{N}v_{N,i}v_{N,j}v_{N,k}=\mathscr{L}'(t_it_jt_k),\quad i,j,k=1,2,\ldots,n-1 \\
\omega_{N+1}v_{N+1,n}^3+\omega_{N+2}v_{N+2,n}^3=\mathscr{L}'(t_n^3)
\end{array}
\right.
\end{eqnarray}
\end{subequations}
and the equation \eqref{eq:moment_equation_constant} can be rewritten as
\begin{eqnarray*}
&&\omega_{1}+ \omega_{2}+\ldots+\omega_{N}=\xi_1,\\ % \label{eq:moment_equation2}\\
&&\omega_{N+1}+\omega_{N+2}=\xi_2 \\
 &&\xi_1+\xi_2=\mathscr{L}'(1). %\label{eq:moment_equation3}
\end{eqnarray*}
Hence we can rewrite the equations \eqref{eq:moment_equation} as
\begin{subequations}\label{eq:moment_equation_new}
\begin{eqnarray}
&&\left\{
  \begin{array}{ll}
\omega_{1}+ \omega_{2}+\ldots+\omega_{N}=\xi_1, \label{eq:moment_equation2}\\
\omega_1v_{1,i}+\omega_2v_{2,i}+\ldots+\omega_{N}v_{N,i}=\mathscr{L}'(t_i),\quad i=1,2,\ldots,n-1 \\
\omega_1v_{1,i}v_{1,j}+\omega_2v_{2,i}v_{2,j}+\ldots+\omega_{N}v_{N,i}v_{N,j}=\mathscr{L}'(t_it_j),\quad i,j=1,2,\ldots,n-1 \\
\omega_1v_{1,i}v_{1,j}v_{1,k}+\ldots+\omega_{N}v_{N,i}v_{N,j}v_{N,k}=\mathscr{L}'(t_it_jt_k),\quad i,j,k=1,2,\ldots,n-1 \\
\end{array}
\right. \label{eq:cuba_dimn-1}\\
&&\left\{
  \begin{array}{ll}
\omega_{N+1}+\omega_{N+2}=\xi_2 \\
\omega_{N+1}v_{N+1,n}+\omega_{N+2}v_{N+2,n}=\mathscr{L}'(t_n)\\
 \omega_{N+1}v_{N+1,n}^2+\omega_{N+2}v_{N+2,n}^2=\mathscr{L}'(t_n^2)\\
 \omega_{N+1}v_{N+1,n}^3+\omega_{N+2}v_{N+2,n}^3=\mathscr{L}'(t_n^3)
\end{array}
\right. \label{eq:cuba_dim1}\\
&&\quad\ \xi_1+\xi_2=\mathscr{L}'(1). \label{eq:xi}
\end{eqnarray}
\end{subequations}
Once $\xi_i$ is determined by \eqref{eq:xi}, then \eqref{eq:cuba_dimn-1} and
\eqref{eq:cuba_dim1} become one $n-1$ dimensional and the other one-dimensional moment
problems respectively. If these two lower dimensional moment problems can be solved, then
we can get a cubature formula of degree 3 with respect to the original integration
problem. Generally speaking, the one-dimensional moment problem can be easily solved, but
it is difficult to be solved for the $n-1$ dimensional moment problem. However, if the
$n-1$ dimensional problem can be divided into one  $n-2$ dimensional moment problem and
the other one-dimensional moment problem and further the $n-2$ dimensional moment problem
can continue this process, then the original integration problem can be turned into $n$
one-dimensional moment problems.

In what follows, we will prove that if $\mathscr{L}$ is a integral functional with
permutation symmetry, then the construction problem of third-degree cubature formulae can
be turned into $n$ one-dimensional moment problems. In fact, we usually encounter this
kind of integral functional, for example, the integration over the simplex, the square,
the ball or the positive sector of the ball, that is $\{(x_1,x_2,\ldots,x_n)|x_i\geq
0,i=1,2,\ldots,n; x_1^2+x_2^2+\ldots,+x_n^2\leq 1\}$.

We first prove that the integral functional with permutation
symmetry must meet the property (P). Thus the original cubature
problem can be turned into two sub-cubature problems.

\begin{thm}\label{th:l1ln}
Let $n\geq 2$ and
\begin{eqnarray*}
&&l_i(x_1,\ldots,x_n)=x_i-x_n,\quad i=1,2,\ldots,n-1\\
&&l_n(x_1,\ldots,x_n)=x_1+x_2+\ldots+x_n+c_n
\end{eqnarray*}
where
$$
c_n=-\frac{\mathscr{L}(x_1^3+(n-3)x_1^2x_2-(n-2)x_1x_2x_3)}{\mathscr{L}(x_1^2-x_1x_2)}.
$$
  If $\mathscr{L}$ is permutation symmetrical,
  then $l_il_n\ (i=1,2,\ldots,n-1)$ is orthogonal to the polynomials
  of degree $\leq 1$.
\end{thm}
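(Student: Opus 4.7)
The plan is to reduce the orthogonality requirement to three shrinking computations, each handled by a different symmetry trick, and finally a single moment identity that forces the chosen value of $c_n$.

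First, I would observe that orthogonality of $l_il_n$ to all polynomials of degree $\leq 1$ is equivalent to the vanishing of $\mathscr{L}(l_il_n)$ and of $\mathscr{L}(l_il_nx_k)$ for $k=1,\ldots,n$. The first of these is free: the symmetric polynomial $l_n=x_1+\cdots+x_n+c_n$ is invariant under the transposition $\sigma=(i\ n)$, so permutation symmetry of $\mathscr{L}$ gives $\mathscr{L}(x_il_n)=\mathscr{L}(x_nl_n)$, hence $\mathscr{L}(l_il_n)=\mathscr{L}((x_i-x_n)l_n)=0$. The value of $c_n$ is irrelevant here.

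Next I would split the degree-1 test functions $x_k$ into three cases using the same transposition $\sigma=(i\ n)$, under which $l_i\mapsto -l_i$ and $l_n\mapsto l_n$. When $k\notin\{i,n\}$, the monomial $x_k$ is also fixed, so $\mathscr{L}(l_il_nx_k)=-\mathscr{L}(l_il_nx_k)=0$ automatically. When $k\in\{i,n\}$, applying $\sigma$ interchanges $\mathscr{L}(l_il_nx_i)$ with $-\mathscr{L}(l_il_nx_n)$, so $\mathscr{L}(l_il_n(x_i+x_n))=0$ is automatic, and both moments vanish simultaneously iff their difference
\[
\mathscr{L}(l_il_n(x_i-x_n))=\mathscr{L}(l_i^2l_n)
\]
is zero. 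So the entire theorem reduces to the single identity $\mathscr{L}(l_i^2l_n)=0$.

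Finally, I would expand $l_i^2l_n=(x_i-x_n)^2(x_1+\cdots+x_n+c_n)$ and evaluate using only the five symmetric moments $\mathscr{L}(x_1^2)$, $\mathscr{L}(x_1x_2)$, $\mathscr{L}(x_1^3)$, $\mathscr{L}(x_1^2x_2)$, $\mathscr{L}(x_1x_2x_3)$, which suffice by permutation symmetry. A count of the $\binom{\cdot}{\cdot}$-type contributions from the nine products $x_i^2x_k$, $x_ix_nx_k$, $x_n^2x_k$ as $k$ ranges over $1,\ldots,n$ will produce a coefficient of $c_n$ equal to $2\mathscr{L}(x_1^2-x_1x_2)$ and a constant term equal to $2\mathscr{L}(x_1^3+(n-3)x_1^2x_2-(n-2)x_1x_2x_3)$. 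Solving for $c_n$ then yields exactly the value stated. By symmetry this value is independent of $i$, so the same $c_n$ works for all $i=1,\ldots,n-1$ simultaneously.

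The only mildly delicate step is the bookkeeping in the last paragraph, where I must track which index coincidences among $\{i,n,k\}$ turn a three-factor product into each of the three orbit types; provided the cases $k=i$, $k=n$, $k\notin\{i,n\}$ are kept separate in each of the three sums, the coefficients drop out as claimed and the specific choice of $c_n$ in the hypothesis is seen to be forced.
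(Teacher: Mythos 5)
Your proof is correct and follows essentially the same route as the paper: permutation symmetry disposes of every moment except those against $x_i$ and $x_n$, and the one surviving identity $\mathscr{L}(l_i^2l_n)=0$ forces the stated $c_n$ (your coefficients $2\mathscr{L}(x_1^2-x_1x_2)$ for $c_n$ and $2\mathscr{L}\bigl(x_1^3+(n-3)x_1^2x_2-(n-2)x_1x_2x_3\bigr)$ for the constant term both check out, including the degenerate case $n=2$). The only step the paper includes that you skip is the observation $\mathscr{L}(x_1^2-x_1x_2)=\tfrac12\mathscr{L}\bigl((x_1-x_2)^2\bigr)>0$, which is needed so that $c_n$ is well defined in the first place; you should add that one line.
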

\begin{proof}
  Take $l_1l_n$ as an example. We first prove $\mathscr{L}(x_1^2-x_1x_2)\neq 0$ to confirm the existence of $l_n$. In
  fact, by the symmetry and the positivity,
$$
\mathscr{L}(x_1^2-x_1x_2)=\frac{1}{2}\mathscr{L}(x_1^2-x_1x_2-x_1x_2+x_2^2)=\frac{1}{2}\mathscr{L}((x_1-x_2)^2)>0.
$$
Let us exam the orthogonality of $l_1l_n$. Assume $n\geq 3$. By the symmetry, we have
  \begin{eqnarray*}
  &&\mathscr{L}(l_1l_n)=\mathscr{L}\big(x_1^2-x_n^2+(x_1-x_n)(x_2+\ldots+x_{n-1}+c_n)\big)=0,\\
 &&\mathscr{L}(x_il_1l_n)=\mathscr{L}\big(x_1^2x_i-x_n^2x_i+(x_1-x_n)(x_2x_i+\ldots+x_{n-1}x_i+c_nx_i)\big)=0,\
 2\leq i\leq n-1
  \end{eqnarray*}
and for $i=1$(the same to $i=n$)
 \begin{eqnarray*}
 &&\mathscr{L}(x_1l_1l_n)=\mathscr{L}\big(x_1^3-x_n^2x_1+(x_1-x_n)(x_2x_1+\ldots+x_{n-1}x_1+c_nx_1)\big)\\
 &=&\mathscr{L}\big(x_1^3+(n-3)x_1^2x_2-(n-2)x_1x_2x_3+c_n(x_1^2-x_1x_2)\big)=0.
  \end{eqnarray*}
 It is easy to verify that the result holds when $n=2$. This completes the proof.
\end{proof}

Let $\mathscr{L}_1$ and $\mathscr{L}^1$ be two linear functionals
defined on $\Pi^{n-1}$ and $\Pi^{1}$, whose moments are determined
by
\begin{eqnarray}
&&\mathscr{L}_1(t_it_jt_k)=\mathscr{L}(l_il_jl_k),\
\mathscr{L}_1(t_it_j)=\mathscr{L}(l_il_j),\
\mathscr{L}_1(t_i)=\mathscr{L}(l_i),\ \mathscr{L}_1(1)=\xi_1^{(0)},\\
&&\mathscr{L}^1(t_n^3)=\mathscr{L}(l_n^3),\
\mathscr{L}^1(t_n^2)=\mathscr{L}(l_n^2),\
\mathscr{L}^1(t_n)=\mathscr{L}(l_n),\ \mathscr{L}^1(1)=\xi_2^{(0)}\\
&&\hspace{2cm}\xi_1^{(0)}+\xi_2^{(0)}=\mathscr{L}(1) \quad\text{and}
\quad 1\leq i,j,k\leq n-1 \notag
\end{eqnarray}
respectively. Thus the construction problem of third-degree
formulas with respect to $\mathscr{L}$ is turned into two smaller
problems, one of which is the construction with respect to
$\mathscr{L}_1$ and the other of which is the construction with
respect to $\mathscr{L}^1$. It is easy to compute
\begin{eqnarray*}
&& \mathscr{L}^1({t_n})=\mathscr{L}(l_n)=n\cdot
\mathscr{L}(x_1)+c_n\mathscr{L}(1),\\
&&
\mathscr{L}^1({t_n}^2)=n\mathscr{L}(x_1^2)+n(n-1)\mathscr{L}(x_1x_2)+2nc_n\cdot
\mathscr{L}(x_1)+c_n^2\mathscr{L}(1)\\
&&
\mathscr{L}^1({t_n}^3)=n\mathscr{L}(x_1^3)+6\binom{n}{2}\mathscr{L}(x_1^2x_2)+6\binom{n}{3}\mathscr{L}(x_1x_2x_3)\\
&&\hspace{1.7cm}+3c_n\big(n\mathscr{L}(x_1^2)+n(n-1)\mathscr{L}(x_1x_2)\big)+3nc_n^2\mathscr{L}(x_1)+c_n^3\mathscr{L}(1)\\
&&\hspace{1.4cm}=n\mathscr{L}(x_1^3)+3n(n-1)\mathscr{L}(x_1^2x_2)+n(n-1)(n-2)\mathscr{L}(x_1x_2x_3)\\
&&\hspace{1.7cm}+3c_n\big(n\mathscr{L}(x_1^2)+n(n-1)\mathscr{L}(x_1x_2)\big)+3nc_n^2\mathscr{L}(x_1)+c_n^3\mathscr{L}(1).\\
\end{eqnarray*}

Next we will show this decomposition process can continue.

Let us consider the cubature formula with respect to
$\mathscr{L}_1$. Obviously, $\mathscr{L}_1$ is also permutation
symmetrical, which allows us to employ theorem \ref{th:l1ln}
continuously. Define
\begin{eqnarray*}
&&l^{(1)}_i(t_1,\ldots,t_{n-1})=t_i-t_{n-1},\quad i=1,2,\ldots,n-2,\\
&&l^{(1)}_{n-1}(t_1,\ldots,t_n)=t_1+t_2+\ldots+t_{n-1}+c_{n-1},
\end{eqnarray*}
where
$$
c_{n-1}=-\frac{\mathscr{L}_1(t_1^3+(n-4)t_1^2t_2-(n-3)t_1t_2t_3)}{\mathscr{L}_1(t_1^2-t_1t_2)},
$$
then $l^{(1)}_il^{(1)}_{n-1},n=1,2,\ldots,n-2$ are the orthogonal
polynomials of degree two with respect to $\mathscr{L}_1$. Noticing
$t_i=l_i(x_1,x_2,\ldots,x_n)$, we have
\begin{eqnarray*}
&&l^{(1)}_i(t_1,\ldots,t_{n-1})=t_i-t_{n-1}=x_i-x_{n-1},\quad i=1,2,\ldots,n-2,\\
&&l^{(1)}_{n-1}(t_1,\ldots,t_n)=t_1+t_2+\ldots+t_{n-1}+c_{n-1} \\
&& \hspace{2.6cm}=x_1+x_2+\ldots+x_{n-1}-(n-1)x_n+c_{n-1}.
\end{eqnarray*}
Again let $\mathscr{L}_2$ and $\mathscr{L}^2$ be two linear
functionals defined on $\Pi^{n-2}$ and $\Pi^{1}$, whose moments are
determined by
\begin{eqnarray*}
&&\mathscr{L}_2(\tau_i\tau_j\tau_k)=\mathscr{L}_1(l_i^{(1)}l_j^{(1)}l_k^{(1)}),\
\mathscr{L}_2(\tau_i\tau_j)=\mathscr{L}_1(l_i^{(1)}l_j^{(1)}),\
\mathscr{L}_2(\tau_i)=\mathscr{L}_1(l_i^{(1)}),\ \mathscr{L}_2(1)=\xi_1^{(1)},\\
&&\mathscr{L}^2(\tau_{n-1}^3)=\mathscr{L}_1((l_{n-1}^{(1)})^3),\
\mathscr{L}^2(\tau_{n-1}^2)=\mathscr{L}_1((l_{n-1}^{(1)})^2),\
\mathscr{L}^2(\tau_{n-1})=\mathscr{L}_1(l_n^{(1)}),\ \mathscr{L}^2(1)=\xi_2^{(1)}\\
&&\hspace{2cm}\xi_1^{(1)}+\xi_2^{(1)}=\xi_1^{(0)} \quad\text{and}
\quad 1\leq i,j,k\leq n-2
\end{eqnarray*}
respectively. It is easy to compute
\begin{eqnarray*}
&& \mathscr{L}^2({\tau_{n-1}})=\mathscr{L}_1(l_{n-1}^{(1)})=\mathscr{L}(l_{n-1}^{(1)})+c_{n-1}\big(\mathscr{L}_1(1)-\mathscr{L}(1)\big),\\
&&
\mathscr{L}^2({\tau_{n-1}}^2)=\mathscr{L}_1((l_{n-1}^{(1)})^2)=\mathscr{L}_1\big([(t_1+t_2+\ldots+t_{n-1})+c_{n-1}]^2\big)\\
&&\hspace{1.9cm} =\mathscr{L}((l_{n-1}^{(1)})^2)+c_{n-1}^2\big(\mathscr{L}_1(1)-\mathscr{L}(1)\big),             \\
&& \mathscr{L}^2({\tau_{n-1}}^3)= \mathscr{L}((l_{n-1}^{(1)})^3)+c_{n-1}^3\big(\mathscr{L}_1(1)-\mathscr{L}(1)\big).\\
\end{eqnarray*}

Assume that $\mathscr{L}_{k}$ is a linear functional defined on
$\Pi^{n-k}$ for every $k(0\leq k<n)$  and satisfies property (P).
Then a cubature problem of degree 3 with respect to
$\mathscr{L}_{k}$ can be divided into two smaller cubature
problems---one with respect to $\mathscr{L}_{k+1}$ and the other
with respect to $\mathscr{L}^{k+1}$. Moreover $\mathscr{L}_{k+1}$
also satisfies the property (P) and then this process can continue
and will end when $k=n-1$. Finally, an $n$-dimensional cubature
problem can be transformed into $n$ one-dimensional cubature
problems.

\begin{thm}\label{th:cn}
  Let
$$
l_{n-k}^{(k)}=x_1+x_2+\ldots+x_{n-k}-(n-k)x_{n-k+1}+c_{n-k}
$$
and  $\mathscr{L}^{k}$ be a linear functional defined on $\Pi^1$
according to the above process, then the corresponding moments are
\begin{eqnarray*}
% \nonumber to remove numbering (before each equation)
  \mathscr{L}^{k+1}(1) &=& \xi^{(k)}_{2}, \\
  \mathscr{L}^{k+1}(\tau) &=& \mathscr{L}(l_{n-k}^{(k)})+c_{n-k}\big(\mathscr{L}_{k}(1)-\mathscr{L}(1)\big) \\
  &=& c_{n-k}\mathscr{L}_{k}(1),\\
  \mathscr{L}^{k+1}(\tau^2) &=& \mathscr{L}((l_{n-k}^{(k)})^2)+c_{n-k}^2\big(\mathscr{L}_{k}(1)-\mathscr{L}(1)\big) \\
  &=& (n-k)(n-k+1)\mathscr{L}(x_1^2-x_1x_2)+c_{n-k}^2\mathscr{L}_k(1),\\
  \mathscr{L}^{k+1}(\tau^3) &=&
  \mathscr{L}((l_{n-k}^{(k)})^3)+c_{n-k}^3\big(\mathscr{L}_{k}(1)-\mathscr{L}(1)\big)\\
  &=&
  (n-k)(n-k+1)(n-k+2)\mathscr{L}(-x_1^3+3x_1^2x_2-2x_1x_2x_3)+c_{n-k}^3\mathscr{L}_{k}(1)
\end{eqnarray*}
where
\begin{eqnarray}\label{eq:cnk}
c_{n-k}=-\dfrac{\mathscr{L}(x_1^3-3x_1^2x_2+2x_1x_2x_3)}{\mathscr{L}(x_1^2-x_1x_2)},\quad
k=1,2,\ldots,n-2.
\end{eqnarray}
\end{thm}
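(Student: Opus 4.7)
The plan is to induct on the level index $k$, tracking at each step three pieces of information: the representation of the level-$k$ variable $t_i^{(k)}$ as an explicit polynomial in $x_1,\ldots,x_n$; an identity comparing $\mathscr{L}_k$ to $\mathscr{L}$; and the four claimed values of $\mathscr{L}^{k+1}$ on $1,\tau,\tau^2,\tau^3$. Once the first two ingredients are in hand, the four moment identities reduce to expanding a power of a single symmetric linear form and sorting the resulting monomials by permutation class.

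First I would show, by induction on $k$, that $t_i^{(k)} = x_i - x_{n-k+1}$ for $i=1,\ldots,n-k$. The case $k=1$ is exactly $t_i=l_i=x_i-x_n$ from Theorem~\ref{th:l1ln}. For the inductive step, the construction gives $t_i^{(k)} = l_i^{(k-1)}(t^{(k-1)}) = t_i^{(k-1)} - t_{n-k+1}^{(k-1)}$, which, after substituting the level-$(k-1)$ hypothesis, collapses to $x_i - x_{n-k+1}$. As an immediate corollary,
\[
l_{n-k}^{(k)} \;=\; \sum_{i=1}^{n-k} t_i^{(k)} + c_{n-k} \;=\; \sum_{i=1}^{n-k} x_i - (n-k)x_{n-k+1} + c_{n-k},
\]
which is the first assertion of the theorem. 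Next I would prove the comparison identity
\[
\mathscr{L}_k(P) \;=\; \mathscr{L}(\widetilde{P}) + P_0 \bigl(\mathscr{L}_k(1) - \mathscr{L}(1)\bigr), \qquad \deg P \leq 3,
\]
where $P_0$ is the constant term of $P$ and $\widetilde{P}$ is $P$ with $t_i^{(k)}$ replaced by $x_i - x_{n-k+1}$. This follows by a second induction on $k$ from the defining relations of $\mathscr{L}_k$, using that the substitution $t_i^{(k-1)}\mapsto l_i^{(k-1)}$ sends polynomials without constant term to polynomials without constant term.

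Applying the comparison identity to $P = (l_{n-k}^{(k)})^j$, whose constant term is $c_{n-k}^j$, reduces the computation of $\mathscr{L}^{k+1}(\tau^j) = \mathscr{L}_k\bigl((l_{n-k}^{(k)})^j\bigr)$ to the evaluation of $\mathscr{L}\bigl((l_{n-k}^{(k)}(x))^j\bigr)$ plus the explicit correction $c_{n-k}^j\bigl(\mathscr{L}_k(1)-\mathscr{L}(1)\bigr)$. Writing $m=n-k$, $y=x_1+\cdots+x_m$ and $z=x_{m+1}$, one expands $l_{n-k}^{(k)}-c_{n-k}=y-mz$ and groups the resulting terms by permutation class. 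The linear case gives $\mathscr{L}(y-mz)=0$, so $\mathscr{L}^{k+1}(\tau)=c_{n-k}\mathscr{L}_k(1)$; the quadratic case collapses cleanly to $(n-k)(n-k+1)\mathscr{L}(x_1^2-x_1x_2)$; the cubic case, after expanding $(y-mz)^3$ and using $k\leq n-2$ to guarantee enough distinct indices, collects into the three classes $\mathscr{L}(x_1^3)$, $\mathscr{L}(x_1^2x_2)$, $\mathscr{L}(x_1x_2x_3)$ in the combination claimed. Finally, the formula for $c_{n-k}$ follows by plugging $t_i^{(k)}=x_i-x_{n-k+1}$ into the recursive definition inherited from applying Theorem~\ref{th:l1ln} to $\mathscr{L}_k$: both numerator and denominator are polynomials without constant term, so the comparison identity lets us replace $\mathscr{L}_k$ by $\mathscr{L}$, after which a short symmetry calculation shows the denominator equals $\mathscr{L}(x_1^2-x_1x_2)$ and each of the three terms in the numerator reduces to the same symmetric quantity $-\mathscr{L}(x_1^3)+3\mathscr{L}(x_1^2x_2)-2\mathscr{L}(x_1x_2x_3)$, leaving the combination $\mathscr{L}(x_1^3-3x_1^2x_2+2x_1x_2x_3)$ claimed in \eqref{eq:cnk}.

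The main obstacle is the bookkeeping in the cubic expansion: there are many triples of indices to classify as all-equal, two-equal, or all-distinct, and the cancellations among the resulting coefficients are sharp, so a careless multiplicity error would propagate into every stated formula. A clean way to organize the work is to treat $y^3, y^2z, yz^2, z^3$ separately and invoke the permutation symmetry of $\mathscr{L}$ only at the very end, rather than expanding $(y-mz)^3$ first and relabeling indices afterwards.
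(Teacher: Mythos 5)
Your overall architecture is sound and is essentially an expanded version of what the paper leaves implicit: the identification $t_i^{(k)}=x_i-x_{n-k+1}$, the comparison identity $\mathscr{L}_k(P)=\mathscr{L}(\widetilde P)+P_0\bigl(\mathscr{L}_k(1)-\mathscr{L}(1)\bigr)$ for $\deg P\leq 3$, and the reduction of each $\mathscr{L}^{k+1}(\tau^j)$ to an $\mathscr{L}$-moment of a power of a single linear form are all correct, and they are exactly the facts the paper's own (very short) proof uses without stating --- that proof only establishes \eqref{eq:cnk} by this substitution and treats the moment identities as direct computation.

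However, your treatment of the cubic moment has a genuine gap. Write $m=n-k$ and $\ell=y-mz$. For any linear form $\sum a_ix_i$ with $\sum a_i=0$, permutation symmetry gives $\mathscr{L}\bigl((\textstyle\sum a_ix_i)^3\bigr)=\bigl(\sum a_i^3\bigr)\mathscr{L}(x_1^3-3x_1^2x_2+2x_1x_2x_3)$, and here $\sum a_i^3=m-m^3$, so $\mathscr{L}((y-mz)^3)=(m-1)m(m+1)\,\mathscr{L}(-x_1^3+3x_1^2x_2-2x_1x_2x_3)$ --- not the coefficient $m(m+1)(m+2)$ claimed in the theorem. (Check $n=3$, $k=1$: $\mathscr{L}((x_1+x_2-2x_3)^3)=6\,\mathscr{L}(-x_1^3+3x_1^2x_2-2x_1x_2x_3)$, whereas $m(m+1)(m+2)=24$.) The missing $3m(m+1)$ comes from the binomial cross term $3c_{n-k}\mathscr{L}((y-mz)^2)=3c_{n-k}\,m(m+1)\mathscr{L}(x_1^2-x_1x_2)$, which, unlike the linear cross term in the quadratic case, does not vanish; it can be folded into the stated form only by first proving \eqref{eq:cnk} and then using $c_{n-k}\mathscr{L}(x_1^2-x_1x_2)=\mathscr{L}(-x_1^3+3x_1^2x_2-2x_1x_2x_3)$. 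So your plan both omits this term and puts the derivation of \eqref{eq:cnk} last, when it is needed before the $\tau^3$ identity. A smaller slip in the same spirit: the three terms in the numerator of $c_{n-k}$ do not all reduce to the same quantity --- $\mathscr{L}_k(t_1^3)$ reduces to $\mathscr{L}((x_1-x_{n-k+1})^3)=0$, while the other two each reduce to $-\mathscr{L}(x_1^3)+3\mathscr{L}(x_1^2x_2)-2\mathscr{L}(x_1x_2x_3)$; were all three equal, the coefficient sum $1+(n-3-k)-(n-2-k)=0$ would make the numerator vanish identically.
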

\begin{proof}
It remains to prove Eq.\eqref{eq:cnk}. It follows from theorem
\ref{th:l1ln} that
$$
c_{n-k}=-\frac{\mathscr{L}_k(t_1^3+(n-3-k)t_1^2t_2-(n-2-k)t_1t_2t_3)}{\mathscr{L}_k(t_1^2-t_1t_2)}.
$$
According to the definition of $\mathscr{L}_k$, we have
\begin{eqnarray*}
&&\mathscr{L}_k(t_1^3+(n-3-k)t_1^2t_2-(n-2-k)t_1t_2t_3)\\
&=&\mathscr{L}(t_1^3+(n-3-k)t_1^2t_2-(n-2-k)t_1t_2t_3)\\
&=&\mathscr{L}\Big((x_1-x_{n-k})^3+(n-3-k)(x_1-x_{n-k})^2(x_2-x_{n-k})\\
&&\hspace{2cm}-(n-2-k)(x_1-x_{n-k})(x_2-x_{n-k})(x_3-x_{n-k})\Big)\\
&=&\mathscr{L}\Big(x_1^3-3x_1^2x_2+2x_1x_2x_3\Big),
\end{eqnarray*}
\begin{eqnarray*}
\mathscr{L}_k(t_1^2-t_1t_2)&=&\mathscr{L}((x_1-x_{n-k})^2-(x_1-x_{n-k})(x_2-x_{n-k}))\\
&=& \mathscr{L}(x_1^2-x_1x_2),
\end{eqnarray*}
where the permutation symmetry is used. This completes the proof.
\end{proof}
\begin{rem}
    In fact $\mathscr{L}_{n-1}$ is also a one-dimensional integration
    functional. The corresponding moment can be calculated by
    \begin{eqnarray}
        \begin{split}
            &\mathscr{L}_{n-1}(1)=\xi_1^{(n-2)}, \\
            &\mathscr{L}_{n-1}(\tau)=\mathscr{L}(x_1-x_2)=0, \\
            &\mathscr{L}_{n-1}(\tau^2)=\mathscr{L}((x_1-x_2)^2)=2\mathscr{L}(x_1^2-x_1x_2), \\
            &\mathscr{L}_{n-1}(\tau^3)=\mathscr{L}((x_1-x_2)^3)=0. \\
        \end{split}
        \label{eq:moment_nth}
    \end{eqnarray}
    For convenience, in what follows let
    $\mathscr{L}^n=\mathscr{L}_{n-1}$.
\end{rem}

Suppose that
\begin{eqnarray}\label{eq:OneDim}
\mathscr{L}^{k}(g)\approx \sum_{i=1}^{n_k}w_{i,k}g(t_{i,k}),
k=1,2,\ldots,n
\end{eqnarray}
is exact for any $g\in \Pi_3^1$. And let
$v^{i,k}=(v_{i,k}^{(1)},v_{i,k}^{(2)},\ldots,v_{i,k}^{(n)})$ be the
solution of
\begin{align}
&\left\{
  \begin{array}{l}
    x_1+x_2+\ldots+x_n+c_n=t_{i,1} \\
  x_{n-1}-x_n=0 \\
  \cdots\cdots\cdots\cdots\cdots\cdots\cdots\cdots \\
  x_{2}-x_n=0\\
   x_1-x_{n}=0
  \end{array}
\right. \qquad\hbox{for $k=1$,}\label{eq:Solution1}\\
 &\left\{
  \begin{array}{l}
    x_1+x_2+\ldots+x_n+c_n=0 \\
  x_1+x_2+\ldots+x_{n-1}-(n-1)x_n+c_{n-1}=0 \\
  \cdots\cdots\cdots\cdots\cdots\cdots\cdots\cdots \\
  x_1+x_2-2x_3+c_{2}=0\\
   x_1-x_2=t_{i,n}
  \end{array}
 \right.\qquad\hbox{for $k=n$,}\label{eq:Solution3}\\
\intertext{and}
 &\left\{
  \begin{array}{l}
    x_1+x_2+\ldots+x_n+c_n=0 \\
  x_1+x_2+\ldots+x_{n-1}-(n-1)x_n+c_{n-1}=0 \\
  \cdots\cdots\cdots\cdots\cdots\cdots\cdots\cdots \\
  x_1+x_2+\ldots+x_{n-k+1}-(n-k+1)x_{n-k+2}+c_{n-k+1}=t_{i,k}\\
  x_{n-k}-x_{n-k+1}=0\\
  \cdots\cdots\cdots\cdots\cdots\cdots\cdots\cdots \\
   x_1-x_{n-k+1}=0
  \end{array}
\right.\qquad\hbox{for $k=2,3,\ldots,n-1$,}\label{eq:Solution2}
\end{align}

Hence the final cubature formula can be written as
\begin{eqnarray*}
\mathscr{L}(f)\approx\sum_{k=1}^n\sum_{i=1}^{n_k}w_{i,k}f(v^{i,k})
\end{eqnarray*}
which is exact for any $f\in \Pi^n_3$. It is clear that the solution
of Eq.\eqref{eq:Solution1} is
$$
(\eta_i,\eta_i,\ldots,\eta_i), \quad \eta_i=\frac{t_{i,1}-c_n}{n}
$$
and the solution of Eq.\eqref{eq:Solution2} is
\begin{eqnarray*}
\left\{
  \begin{array}{ll}
   x_n=\dfrac{c_{n-1}-c_n-\delta_{2,k}t_{i,2}}{n},  \\
   x_{n-1}=x_n+\dfrac{c_{n-2}-c_{n-1}}{n-1}=x_n \\
   \ldots\ldots\ldots \\
   x_{n-k+3}=x_{n-k+4}\\
   x_{n-k+2}=x_{n-k+3}+\dfrac{c_{n-k+2}-c_{n-k+3}-t_{i,k}}{n-k+2}=x_{n-k+3}-\dfrac{t_{i,k}}{n-k+2}\\
   x_1=x_2=\ldots=x_{n-k+1}=x_{n-k+2}+\dfrac{t_{i,k}-c_{n-k+1}}{n-k+1}
  \end{array}
\right.
\end{eqnarray*}
where $\delta_{2,k}=1$ if $k=2$ and $\delta_{2,k}=0$ if $k\neq 2$,
and the solution of Eq.\eqref{eq:Solution3} is
\begin{eqnarray*}
\left\{
  \begin{array}{ll}
   x_n=x_{n-1}=\ldots=x_3=\dfrac{c_{n-1}-c_n}{n} \\
   x_{2}=x_3-\dfrac{c_{n-1}+t_{i,n}}{2}\\
   x_1=x_2+t_{i,n}=x_3+\dfrac{t_{i,n}-c_{n-1}}{2}
  \end{array}
\right.
\end{eqnarray*}

Collecting the above discussion, we have
\begin{thm}\label{th:coef}
Assume that $\mathscr{L}$ is permutation symmetrical. Then there
must exist a cubature formula
\begin{eqnarray*}
\mathscr{L}(f)&\approx
&\sum_{k=2}^{n}\sum_{i=1}^{m_k}w_{i,k}f(\underbrace{\alpha_{i,k},\ldots,\alpha_{i,k}}_{n-k+1},\beta_{i,k},\underbrace{\gamma,\gamma,\ldots,\gamma}_{k-2})+\sum_{i=1}^{m_1}w_{i,1}f(\alpha_{i,1},\ldots,\alpha_{i,1}):=C(f)\\
\end{eqnarray*}
which is exact for every polynomial of degree $\leq 3$. In the
formula, $\alpha$'s and $\beta$'s can be computed by
\begin{eqnarray*}
\left\{
  \begin{array}{ll}
   \gamma=\dfrac{c_{n-1}-c_n}{n}\\
    \beta_{i,k}=\gamma-\dfrac{t_{i,k}}{n-k+2}, & \hbox{$2\leq k \leq n-1$;} \\
     \alpha_{i,k}=\beta_{i,k}+\dfrac{t_{i,k}-c_{n-k}}{n-k+1}, & \hbox{$2\leq k \leq n-1$;} \\
    \alpha_{i,1}=\dfrac{t_{i,1}-c_{n}}{n}, & \\
    \beta_{i,n}=\gamma-\dfrac{t_{i,n}+c_2}{2}\\
    \alpha_{i,n}=\beta_{i,n}+t_{i,n}
  \end{array}
\right.
\end{eqnarray*}
where $t_{i,k}$'s and $w_{i,k}$'s are the nodes and weights of the
quadrature formula \eqref{eq:OneDim} with respect to
$\mathscr{L}^k$.
\end{thm}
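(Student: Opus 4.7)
The plan is to cash in the decomposition machinery already built in the paper: Theorems \ref{th:l1ln} and \ref{th:cn}, together with the remark identifying $\mathscr{L}^n$ with $\mathscr{L}_{n-1}$, reduce the $n$-dimensional cubature problem to exactly $n$ one-dimensional moment problems $\mathscr{L}^k$, $k=1,\ldots,n$. The proof then has two tasks: verify that the recursive decomposition can be iterated $n-1$ times without leaving the permutation-symmetric class, and invert the coordinate systems \eqref{eq:Solution1}--\eqref{eq:Solution3} to read off the closed-form expressions for $\alpha_{i,k},\beta_{i,k},\gamma$.

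I would handle the iteration by induction on the recursion depth. At each level the functional $\mathscr{L}_k$ is permutation symmetric on $\Pi^{n-k}$ by construction, so Theorem \ref{th:l1ln} delivers property (P) and the splitting \eqref{eq:moment_equation_new} decouples the cubature problem for $\mathscr{L}_k$ into one for $\mathscr{L}_{k+1}$ and one univariate problem for $\mathscr{L}^{k+1}$ whose moments are supplied by Theorem \ref{th:cn}. After $n-1$ iterations the dimension is exhausted and the residual $\mathscr{L}_{n-1}$ is itself univariate, identified with $\mathscr{L}^n$ via the remark. For each $k$ select any two-node quadrature \eqref{eq:OneDim} exact on $\Pi_3^1$, and denote its nodes and weights by $t_{i,k}$ and $w_{i,k}$; the weights transport unchanged through \eqref{eq:moment_equation_new}, since that decomposition couples $\omega_1,\ldots,\omega_N$ and $\omega_{N+1},\omega_{N+2}$ only through the scalar split $\xi_1+\xi_2$.

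It then remains to invert the coordinate systems. For $k=1$, \eqref{eq:Solution1} forces $x_1=\cdots=x_n$ and the top equation yields $\alpha_{i,1}=(t_{i,1}-c_n)/n$. For $2\le k\le n-1$, the lower block of \eqref{eq:Solution2} forces $x_1=\cdots=x_{n-k+1}=:\alpha_{i,k}$; the $t_{i,k}$-equation then fixes $x_{n-k+2}=:\beta_{i,k}$ in terms of $\alpha_{i,k}$, and the upper block cascades one equation at a time into the tail coordinates. Using \eqref{eq:cnk}, which asserts $c_{n-1}=c_{n-2}=\cdots=c_2$, successive differences of adjacent upper equations telescope to $x_{n-k+3}=x_{n-k+4}=\cdots=x_n$, and the topmost equation $x_1+\cdots+x_n+c_n=0$ pins this common tail value to $\gamma=(c_{n-1}-c_n)/n$; rearranging yields the asserted expressions for $\beta_{i,k}$ and $\alpha_{i,k}$. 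The case $k=n$ in \eqref{eq:Solution3} is analogous, with $t_{i,n}$ entering through $x_1-x_2=t_{i,n}$ at the bottom of the cascade and producing $\beta_{i,n}=\gamma-(t_{i,n}+c_2)/2$ and $\alpha_{i,n}=\beta_{i,n}+t_{i,n}$.

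The only real obstacle is administrative: threading the indices through the $n$ nested substitutions, and confirming that the telescoping triggered by the coincidence $c_{n-1}=\cdots=c_2$ lands exactly on the clean closed form $\gamma=(c_{n-1}-c_n)/n$ (rather than on an expression depending on all $c_{n-j}$). Once those explicit formulas are in hand, assembling the univariate contributions at each recursion level yields the cubature formula $C(f)$ and exactness on $\Pi^n_3$ follows directly from the exactness of each 1D rule combined with the decomposition of the moment equations \eqref{eq:moment_equation_new}.
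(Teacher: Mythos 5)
Your proposal is correct and follows essentially the same route as the paper, which omits the proof as ``a direct result of the computation'': iterate the decomposition of Theorems \ref{th:l1ln} and \ref{th:cn} to reach $n$ univariate moment problems, then invert the triangular systems \eqref{eq:Solution1}--\eqref{eq:Solution3}, using $c_2=\cdots=c_{n-1}$ from \eqref{eq:cnk} to collapse the tail coordinates to $\gamma=(c_{n-1}-c_n)/n$. The closed forms you describe coincide with the explicit solutions displayed in the paper immediately before the theorem.
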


The proof is a direct result of the computation and is omitted.

\begin{rem}
  For the quadrature problem of the one-dimensional moment, it is
  well known that the number of the nodes $n_k=2$ in the general
  case. Hence the total number of the nodes of the cubature formula
  with respect to $\mathscr{L}$ is $2n$ generally and $2n$ is
  usually the minimum among all the cubature formula of degree 3 except one case of the integration over the n-dimensional simplex \cite{stroud1961}.
For more knowledge of the problem of the one-dimensional moment,  we
can refer to the appendix of \cite{tnt}.

%If some of the nodes are beyond the domain, we can change the value of the $\mathscr{L}^k(1)$ to meet it. In fact, the change of $\mathscr{L}^k(1)$ do not have any effect on $C(f)$ for a polynomial $f$ of $1\leq \deg(f)\leq 3$. Hence if we assume that $v^0$ be the solution to
%\begin{eqnarray*}
%\left\{
%  \begin{array}{l}
%    x_1+x_2+\ldots+x_n+c_n=0 \\
%  x_1+x_2+\ldots+x_{n-1}-(n-1)x_n+c_{n-1}=0 \\
%  \cdots\cdots\cdots\cdots\cdots\cdots\cdots\cdots \\
%  x_1+x_2-2x_3+c_{2}=0\\
%   x_1-x_2=0
%  \end{array}
% \right.
%\end{eqnarray*}
%that is $v^0=(\gamma-c_2/2,\gamma-c_2/2,\gamma,\ldots,\gamma)$, then we add this point as one of the nodes and the corresponding weight can be calculated as follows
%$$
%\mathscr{L}(1)-\sum_{k=1}^{n}\sum_{i=1}^{m_k}w_{i,k}.
%$$
\end{rem}
\begin{rem}
  For convenience, we present the relations of $\xi$'s as follows
  \begin{eqnarray*}
  \left.
    \begin{array}{ccccccccccc}
      \mathscr{L}(1) & \longrightarrow & \xi_1^{(0)} & \longrightarrow & \xi_1^{(1)} &  \longrightarrow & \ldots & \longrightarrow & \xi_1^{(n-2)}\\
      &  &  + &  &  +  & &&& + \\
      &          &  \xi_2^{(0)} &  & \xi_2^{(1)} & && & \xi_2^{(n-2)}\\
    \end{array}
  \right.
  \end{eqnarray*}
According to the previous discussion, it is clear that
$$
\mathscr{L}^k(1)=\xi_2^{(k-1)}\quad \text{for}\quad 1\leq k\leq n-1
\quad \text{and} \quad
\mathscr{L}^n(1)=\mathscr{L}_{n-1}(1)=\xi_1^{(n-2)}.
$$
\end{rem}

\section{Numerical Examples}
$\bullet$ Firstly take the integration over the n-dimensional simplex as
an example. Define
$$
\mathscr{L}(f)=\int_{T_n}f(x_1,x_2,\ldots,x_n)\mathrm{d}x_1\mathrm{d}x_2\ldots\mathrm{d}x_n
$$
where
$$
T_n=\{(x_1,x_2,\ldots,x_n)|x_1+x_2+\ldots+x_n\leq 1,\quad x_i\geq
0,\quad i=1,2,\ldots,n\}.
$$
It is well known that
$$
\mathscr{L}(x_1^{\alpha_1}x_2^{\alpha_2}\ldots
x_n^{\alpha_n})=\dfrac{\alpha_1!\alpha_2!\ldots
\alpha_n!}{(n+\alpha_1+\alpha_2+\ldots+\alpha_n)!}.
$$
Then by a simple computation, we have
\begin{eqnarray*}
&&c_i=-\frac{2}{n+3},i=2,3,\ldots,n-1, \quad c_n=-\frac{n+2}{n+3},\\
&&\gamma=\frac{1}{n+3}.
\end{eqnarray*}
Here if we take
$$
\xi_2^{(i)}=t_{i+1}\cdot \frac{1}{n\cdot n!},\ \xi_1^{(n-2)}=t_n \cdot \frac{1}{n\cdot
n!}, \ i=0,1,\ldots,n-2,
$$
and $\sum_{i=1}^n t_i=n$, then the moments of
$\mathscr{L}^{k+1}(1\leq k\leq n-2)$ are
\begin{eqnarray*}
% \nonumber to remove numbering (before each equation)
  \mathscr{L}^{k+1}(1) &=& t_{k+1}\frac{1}{n\cdot
n!}, \\
  \mathscr{L}^{k+1}(\tau) &=& c_{n-k}\mathscr{L}_{k}(1)=-\frac{2(n-\sum_{i=1}^k t_k)}{(n+3)n\cdot
n!}, \\
  \mathscr{L}^{k+1}(\tau^2) &=& \mathscr{L}((l_{n-k}^{(k)})^2)+c_{n-k}^2\big(\mathscr{L}_{k}(1)-\mathscr{L}(1)\big)=\frac{(n-k)(n-k+1)}{(n+2)!}+\frac{4(n-\sum_{i=1}^k t_k)}{n(n+3)^2\cdot
n!}, \\
  \mathscr{L}^{k+1}(\tau^3) &=&
  (n-k)(n-k+1)(n-k+2)\mathscr{L}(-x_1^3+3x_1^2x_2-2x_1x_2x_3)+c_{n-k}^3\mathscr{L}_{k}(1)\\
  &=&\frac{-2(n-k)(n-k+1)(n-k+2)}{(n+3)!}-\frac{8(n-\sum_{i=1}^k t_k)}{n(n+3)^3\cdot
n!}
\end{eqnarray*}
and
\begin{eqnarray*}
% \nonumber to remove numbering (before each equation)
  \mathscr{L}^{1}(1) &=& \frac{t_1}{n\cdot
n!}, \\
  \mathscr{L}^{1}(\tau) &=& \frac{-2}{(n+1)!(n+3)}, \\
  \mathscr{L}^{1}(\tau^2) &=& \frac{n^2+5n+8}{(n+3)!(n+3)}, \\
  \mathscr{L}^{1}(\tau^3) &=&
  -\frac{2(n+2)(n+4)}{(n+3)^2(n+3)!}
\end{eqnarray*}
and
\begin{eqnarray*}
% \nonumber to remove numbering (before each equation)
  \mathscr{L}^{n}(1) &=& \frac{t_n}{n\cdot
n!}, \\
  \mathscr{L}^{n}(\tau) &=& 0, \\
  \mathscr{L}^{n}(\tau^2) &=& \frac{2}{(n+2)!}, \\
  \mathscr{L}^{n}(\tau^3) &=&
  0.
\end{eqnarray*}
By taking different values for $\xi$s, we can get different cubature
formulae. For $n=3$ and $n=4$, if we take all $t_i=1$, then we can
get formulas as showed in Tables \ref{tab:t31} and \ref{tab:t41}.
\begin{table}[h]
  \centering
  \begin{tabular}{|cccc|}
%\multicolumn{4}{|c|}{$n=3$}    \\
  \hline
  $x_1$ & $x_2$ & $x_3$ & weight \\ \hline
  % after \\: \hline or \cline{col1-col2} \cline{col3-col4} ...
   0.34240723692377 &   0.34240723692377 &   0.34240723692377  &  0.01469064053612  \\
   0.14125289379518 &   0.14125289379518 &   0.14125289379518  &  0.04086491501944  \\
   0.41353088165296 &   0.41353088165296 &   0.00627157002742  &  0.01887111233337  \\
   0.12380973765487 &   0.12380973765487 &   0.58571385802358  &  0.03668444322218  \\
   0.60719461208592 &   0.05947205458075 &   0.16666666666667  &  0.02777777777778  \\
   0.05947205458075 &   0.60719461208592 &   0.16666666666667  &  0.02777777777778  \\
  \hline
\end{tabular}
  \caption{Nodes and weights for $T_3$}\label{tab:t31}
\end{table}

\begin{table}[h]
  \centering
\begin{tabular}{|cccc|}
%\multicolumn{4}{c}{$n=4$}    \\
  \hline
  $x_1$ & $x_2$ & $x_3$ & $x_4$  \\ \hline
  % after \\: \hline or \cline{col1-col2} \cline{col3-col4} ...
   0.27145760185760 &  0.27145760185760 &  0.27145760185760 &  0.27145760185760     \\
   0.12024746726682 &  0.12024746726682 &  0.12024746726682 &  0.12024746726682     \\
   0.30652570925957 &  0.30652570925957 &  0.30652570925957 & -0.06243427063585     \\
   0.11154151763119 &  0.11154151763119 &  0.11154151763119 &  0.52251830424930     \\
   0.37131176827505 &  0.37131176827505 & -0.02833782226438 &  0.14285714285714     \\
   0.09266869570542 &  0.09266869570542 &  0.52894832287488 &  0.14285714285714     \\
   0.54391317546145 &  0.02751539596712 &  0.14285714285714 &  0.14285714285714     \\
   0.02751539596712 &  0.54391317546145 &  0.14285714285714 &  0.14285714285714     \\
  \hline
  \multicolumn{4}{|c|}{$w_1=0.00254167472911,\ w_2=
0.00787499193755,\ w_3=0.00294495824332,$}\\
\multicolumn{4}{|c|}{$w_4=0.00747170842335,\ w_5=0.00365639117145,\
w_6=0.00676027549522,$}\\ \multicolumn{4}{|c|}{$\
w_7=0.00520833333333,\ w_8=0.00520833333333.$}    \\ \hline
\end{tabular}
  \caption{Nodes and weights for $T_4$}\label{tab:t41}
\end{table}

In Table \ref{tab:t31}, the first point is outside of the region. To
avoid it, we can take
$$
t_1=\frac{93}{85},\ t_2=\frac{378}{391},\ t_3=\frac{108}{115},
$$
and the corresponding cubature formula is listed in Table
\ref{tab:t32}. In the formula, the first and third nodes are on the
boundary of the region $T_3$.

\begin{table}[h]
\centering
\begin{tabular}{|cccc|}
  \hline
  $x_1$ & $x_2$ & $x_3$ & weight \\ \hline
  % after \\: \hline or \cline{col1-col2} \cline{col3-col4} ...
   0.33333333333333 &   0.33333333333333 &   0.33333333333333  &  0.01875000000000  \\
   0.14285714285714 &   0.14285714285714 &   0.14285714285714  &  0.04203431372549  \\
   0.41666666666667 &   0.41666666666667 &   0.00000000000000  &  0.01875000000000  \\
   0.12037037037037 &   0.12037037037037 &   0.59259259259259  &  0.03495843989770  \\
   0.61593041596355 &   0.05073625070311 &   0.16666666666667  &  0.02608695652174  \\
   0.05073625070311 &   0.61593041596355 &   0.16666666666667  &  0.02608695652174  \\
  \hline
\end{tabular}
\caption{Nodes and weights for $T_3$}\label{tab:t32}
\end{table}

If we take
$$
t_1=\frac{94}{85},\ t_2=1,\ t_3=\frac{76}{85},
$$
then all the nodes are inside the region, see Table \ref{tab:t33}.

\begin{table}[h]
\centering
\begin{tabular}{|cccc|}
  \hline
  $x_1$ & $x_2$ & $x_3$ & weight \\ \hline
  % after \\: \hline or \cline{col1-col2} \cline{col3-col4} ...
   0.33237874197689 &   0.33237874197689 &   0.33237874197689  &  0.01927056497746  \\
   0.14303319621635 &   0.14303319621635 &   0.14303319621635  &  0.04216734351927  \\
   0.41247250927755 &   0.41247250927755 &   0.00838831477823  &  0.02043165185637  \\
   0.12085734331926 &   0.12085734331926 &   0.59161864669481  &  0.03512390369919  \\
   0.61593041596355 &   0.04371016618787 &   0.16666666666667  &  0.02516339869281  \\
   0.04371016618787 &   0.61593041596355 &   0.16666666666667  &  0.02516339869281  \\
  \hline
\end{tabular}
\caption{Nodes and weights for $T_3$}\label{tab:t33}
\end{table}

In Table \ref{tab:t41}, all the weights are positive. However, we
find there exist three points outside of the region. If we take
$$
t_1=\frac{104}{75},\ t_2=\frac{3577}{2775},\ t_3=\frac{9947}{8880},\
t_4=\frac{49}{60}.
$$
and add one more point with weight $-49/80$, then the corresponding
formula is showed in Table \ref{tab:t42}.

\begin{table}[h]
  \centering
\begin{tabular}{|cccc|}
  \hline
  $x_1$ & $x_2$ & $x_3$ & $x_4$  \\ \hline
  % after \\: \hline or \cline{col1-col2} \cline{col3-col4} ...
   0.25000000000000 &  0.25000000000000 &  0.25000000000000 &  0.25000000000000     \\
   0.12500000000000 &  0.12500000000000 &  0.12500000000000 &  0.12500000000000     \\
   0.28571428571429 &  0.28571428571429 &  0.28571428571429 &  0.00000000000000     \\
   0.11224489795918 &  0.11224489795918 &  0.11224489795918 &  0.52040816326531     \\
   0.35714285714286 &  0.35714285714286 &  0.00000000000000 &  0.14285714285714     \\
   0.07792207792208 &  0.07792207792208 &  0.55844155844156 &  0.14285714285714     \\
   0.00000000000000 &  0.42857142857143 &  0.14285714285714 &  0.14285714285714     \\
   0.42857142857143 &  0.00000000000000 &  0.14285714285714 &  0.14285714285714     \\
   0.28571428571429 &  0.28571428571429 &  0.14285714285714 &  0.14285714285714     \\
  \hline
 \multicolumn{4}{|c|}{$w_1=0.00555555555556,\ w_2=
                           0.00888888888889,\
                       w_3=0.00600490196078,$}\\
\multicolumn{4}{|c|}{$w_4=0.00742227521640,\
                      w_5=0.00633074935401,\
                      w_6=0.00533755957993,$}\\
\multicolumn{4}{|c|}{$\
                      w_7=0.00425347222222,\ w_8=0.00425347222222, w_9=-49/80.$}    \\
\hline
\end{tabular}
  \caption{Nodes and weights for $T_4$}\label{tab:t42}
\end{table}

In Table \ref{tab:t42}, there are 5 points on the boundary. To make
all the nodes inside the region, we can take
$$
t_1=\frac{7}{5},\ t_2=\frac{187}{145},\ t_3=\frac{179522}{160283},\ t_4=\frac{5}{6}
$$
and add one more node with weight $-618391/961698$ and the corresponding formula is shown
in Table \ref{tab:t43}.

\begin{table}[h]
  \centering
\begin{tabular}{|cccc|}
  \hline
  $x_1$ & $x_2$ & $x_3$ & $x_4$  \\ \hline
  % after \\: \hline or \cline{col1-col2} \cline{col3-col4} ...
   0.24955035825246 &  0.24955035825246 &  0.24955035825246 &  0.24955035825246     \\
   0.12511271452382 &  0.12511271452382 &  0.12511271452382 &  0.12511271452382     \\
   0.28567845223177 &  0.28567845223177 &  0.28567845223177 &  0.00010750044754     \\
   0.11210697374487 &  0.11210697374487 &  0.11210697374487 &  0.52082193590823     \\
   0.35714280321315 &  0.35714280321315 &  0.00000010785942 &  0.14285714285714     \\
   0.07749399446444 &  0.07749399446444 &  0.55929772535683 &  0.14285714285714     \\
   0.56855699818890 &  0.00287157323967 &  0.14285714285714 &  0.14285714285714     \\
   0.00287157323967 &  0.56855699818890 &  0.14285714285714 &  0.14285714285714     \\
   0.28571428571429 &  0.28571428571429 &  0.14285714285714 &  0.14285714285714     \\
  \hline
 \multicolumn{4}{|c|}{$w_1=0.00566710734383,\ w_2=0.0089162259895080,\
w_3=0.006035977209200,$}\\
\multicolumn{4}{|c|}{$w_4=0.00739793083678,\
w_5=0.0063627387083462,\ w_6=0.005780572945942,$}\\
\multicolumn{4}{|c|}{$\
w_7=0.00434027777778,\ w_8=0.0043402777777778,\ w_9=-0.643019950129875.$}    \\
\hline
\end{tabular}
  \caption{Nodes and weights for $T_4$}\label{tab:t43}
\end{table}

The integration problem on the $n$-simplex was studied very
extensively. According to the collection of R. Cools in the
website(http://nines.cs.kuleuven.be/research/ecf/ecf.html), the
minimum number of nodes in the third-degree formulas is $n+2$, in
which there is a negative weight. Except the $(n+2)$-point formula,
the minimum number is 8 and 10 for $n=3$ and $n=4$, respectively. If
we only consider the formulas with positive weight, the minimum of
the points is 8 and 11 for $n=3$ and $n=4$ respectively. Therefore
our formula for $n=3$ and $n=4$ have the fewest numbers among all
the formulas with positive weights.

$\bullet$ Secondly take the integration over the positive sector of a ball as an example.
Define
$$
\mathscr{L}(f)=\int_{S_n}f(x_1,x_2,\ldots,x_n)\mathrm{d}x_1\mathrm{d}x_2\ldots\mathrm{d}x_n
$$
where
$S_n=\{(x_1,x_2,\ldots,x_n)|x_1^2+x_2^2+\ldots+x_n^2\leq 1, x_1\geq 0,\ldots,
x_n\geq 0\}$. It is easy to get that
$$
\mathscr{L}(x_1^{\alpha_1}x_2^{\alpha_2}\ldots
x_n^{\alpha_n})=\dfrac{(\alpha_1-1)!!(\alpha_2-1)!!\ldots
(\alpha_n-1)!!}{(n+\alpha_1+\alpha_2+\ldots+\alpha_n)!!}\cdot
\Big(\dfrac{\pi}{2}\Big)^{\big[\frac{n-n_o}{2}\big]}
$$
where $n_o$ denotes the number of the odd number among
$\alpha_1,\alpha_2,\ldots,\alpha_n$ and $m!!$ denotes the double
factorial of $m$ and $m!!=1$ if $m\leq 0$.
Then by a simple computation, we have
\begin{eqnarray*}
    &&c_i=-\dfrac{(n+2)!!}{(n+3)!!}\cdot\Big(\dfrac{\pi}{2}\Big)^{\big[\frac{n-1}{2}\big]-\big[\frac{n}{2}\big]}\cdot
    \dfrac{4-\pi}{\pi-2},i=2,3,\ldots,n-1, \\
    && c_n=-\dfrac{(n+2)!!}{(n+3)!!}\cdot\Big(\dfrac{\pi}{2}\Big)^{\big[\frac{n-1}{2}\big]-\big[\frac{n}{2}\big]}\cdot
    \dfrac{(n-1)\pi-(2n-4)}{\pi-2},\\
&&\gamma=\dfrac{(n+2)!!}{(n+3)!!}\cdot\Big(\dfrac{\pi}{2}\Big)^{\big[\frac{n-1}{2}\big]-\big[\frac{n}{2}\big]}.
\end{eqnarray*}

Here if we take
$$
\xi_2^{(i)}=\frac{t_{i+1}}{n\cdot n!!}\cdot
\Big(\dfrac{\pi}{2}\Big)^{\big[\frac{n}{2}\big]},\ \xi_1^{(n-2)}=\frac{t_{n}}{n\cdot
n!!}\cdot \Big(\dfrac{\pi}{2}\Big)^{\big[\frac{n}{2}\big]},\ i=0,1,2,\ldots,n-2,
$$
then the moments of $\mathscr{L}^{k+1}(1\leq k\leq n-2)$ are
\begin{eqnarray*}
% \nonumber to remove numbering (before each equation)
  \mathscr{L}^{k+1}(1) &=& \frac{t_{k+1}}{n\cdot
  n!!}\cdot \big(\frac{\pi}{2}\big)^{[\frac{n}{2}]}, \\
  \mathscr{L}^{k+1}(\tau) &=&
  c_{n-k}\mathscr{L}_{k}(1)=-\dfrac{(n+2)\cdot (n-\sum_{i=1}^kt_i)}{n\cdot (n+3)!!}\cdot\Big(\dfrac{\pi}{2}\Big)^{\big[\frac{n-1}{2}\big]}\cdot    \dfrac{4-\pi}{\pi-2}, \\
  \mathscr{L}^{k+1}(\tau^2) &=&
  \frac{(n-k)(n-k+1)}{(n+2)!!}\Big(\frac{\pi}{2}\Big)^{[\frac{n-2}{2}]}\cdot
  \Big(\frac{\pi}{2}-1\Big)\\
  &&+\frac{(n-\sum_{i=1}^kt_i)[(n+2)!!]^2}{n\cdot n!!\cdot
  [(n+3)!!]^2}\cdot \Big(\frac{4-\pi}{\pi-2}\Big)^2\cdot
  \Big(\frac{\pi}{2}\Big)^{2[\frac{n-1}{2}]-[\frac{n}{2}]}, \\
  \mathscr{L}^{k+1}(\tau^3) &=&
  \frac{(n-k)(n-k+1)(n-k+2)}{(n+3)!!}\Big(\frac{\pi}{2}\Big)^{[\frac{n-3}{2}]}\cdot
  \Big(\frac{\pi}{2}-2\Big)\\
  &&-\frac{(n-\sum_{i=1}^kt_i)[(n+2)!!]^3}{n\cdot n!!\cdot
  [(n+3)!!]^3}\cdot \Big(\frac{4-\pi}{\pi-2}\Big)^3\cdot
  \Big(\frac{\pi}{2}\Big)^{3[\frac{n-1}{2}]-2[\frac{n}{2}]}
\end{eqnarray*}
and
\begin{eqnarray*}
% \nonumber to remove numbering (before each equation)
  \mathscr{L}^{1}(1) &=& \frac{t_1}{n\cdot
n!!}\cdot \Big(\dfrac{\pi}{2}\Big)^{\big[\frac{n}{2}\big]}, \\
\mathscr{L}^{1}(\tau) &=& \frac{2(n\pi-3n+\pi-4)}{(n+3)!!\cdot (\pi-2)}\cdot
\Big(\frac{\pi}{2}\Big)^{[\frac{n-1}{2}]}, \\
\mathscr{L}^{1}(\tau^2) &=&
\frac{n(\frac{\pi}{2}+n-1)}{(n+2)!!}\Big(\frac{\pi}{2}\Big)^{[\frac{n-2}{2}]}-\frac{2n\cdot
(n+2)!!}{(n+1)!!\cdot (n+3)!!}\cdot
\Big(\frac{\pi}{2}\Big)^{2[\frac{n-1}{2}]-[\frac{n}{2}]}\cdot
\dfrac{(n-1)\pi-(2n-4)}{\pi-2}\\
&& + \frac{n+2}{[(n+3)!!]^2}\cdot \Big(\frac{\pi}{2}\Big)^{2[\frac{n-1}{2}]-[\frac{n}{2}]}\cdot
\Big(\dfrac{(n-1)\pi-(2n-4)}{\pi-2}\Big)^2,\\
\mathscr{L}^{1}(\tau^3) &=& \frac{3n^2-n}{(n+3)!!}\cdot
\Big(\frac{\pi}{2}\Big)^{[\frac{n-1}{2}]}+\frac{n(n-1)(n-2)}{(n+3)!!}\Big(\frac{\pi}{2}\Big)^{[\frac{n-3}{2}]}\\
&& -\frac{3n(\frac{\pi}{2}+n-1)}{(n+3)!!}\cdot
\Big(\frac{\pi}{2}\Big)^{[\frac{n-3}{2}]}\cdot
\dfrac{(n-1)\pi-(2n-4)}{\pi-2}\\
&& +\frac{3n}{(n+1)!!}\cdot\Big[\frac{(n+2)!!}{(n+3)!!}\Big]^2\cdot
\Big(\frac{\pi}{2}\Big)^{3[\frac{n-3}{2}]-2[\frac{n}{2}]}\cdot\Big(\dfrac{(n-1)\pi-(2n-4)}{\pi-2}\Big)^2\\
&& -\frac{1}{n!!}\cdot\Big[\frac{(n+2)!!}{(n+3)!!}\Big]^3\cdot
\Big(\frac{\pi}{2}\Big)^{3[\frac{n-3}{2}]-2[\frac{n}{2}]}\cdot\Big(\dfrac{(n-1)\pi-(2n-4)}{\pi-2}\Big)^3
\end{eqnarray*}
and
\begin{eqnarray*}
% \nonumber to remove numbering (before each equation)
  \mathscr{L}^{n}(1) &=& \frac{t_n}{n\cdot
n!!}\cdot \Big(\dfrac{\pi}{2}\Big)^{\big[\frac{n}{2}\big]}, \\
  \mathscr{L}^{n}(\tau) &=& 0, \\
  \mathscr{L}^{n}(\tau^2) &=&
  \frac{2}{(n+2)!!}\Big(\frac{\pi}{2}\Big)^{[\frac{n-2}{2}]}\cdot
  \Big(\frac{\pi}{2}-1\Big), \\
  \mathscr{L}^{n}(\tau^3) &=&
  0.
\end{eqnarray*}

If take all $t_i=1$ for $n=3$ and $n=4$, then we can get formulae as showed in Tables
\ref{tab:s31} and \ref{tab:s41}. If we take $t_1=0.8, t_2=1.31, t_3=1.11$ and $t_4=0.78$
for $n=4$, then we can get a formula with all the nodes inside the region, see Table
\ref{tab:s42}.

\begin{table}[h]
\centering
\begin{tabular}{|cccc|}
  \hline
  $x_1$ & $x_2$ & $x_3$ & weight \\ \hline
0.53887049476004 & 0.53887049476004 & 0.53887049476004 & 0.07852747507104  \\
0.18341741723402 & 0.18341741723402 & 0.18341741723402 & 0.09600545012840  \\
0.57520979290336 & 0.57520979290336 & 0.02206116228206 & 0.06975676243570  \\
0.20283315000517 & 0.20283315000517 & 0.76681444807844 & 0.10477616276373  \\
0.76016315955181 & 0.09981758853698 & 0.31250000000000 & 0.08726646259972  \\
0.09981758853698 & 0.76016315955181 & 0.31250000000000 & 0.08726646259972  \\
    \hline
\end{tabular}
  \caption{Nodes and weights for $S_3$}\label{tab:s31}
\end{table}

\begin{table}[h]
\centering
\begin{tabular}{|cccc|}
  \hline
  $x_1$ & $x_2$ & $x_3$ & $x_4$  \\ \hline
0.47721483105875  & 0.47721483105875 &  0.47721483105875 &  0.47721483105875  \\
0.17126237887529  & 0.17126237887529 &  0.17126237887529 &  0.17126237887529  \\
0.48420041705925  & 0.48420041705925 &  0.48420041705925 & -0.06966276495181  \\
0.20526869095372  & 0.20526869095372 &  0.20526869095372 &  0.76713241336478  \\
0.56004995494835  & 0.56004995494835 & -0.02818760532449 &  0.29102618165375  \\
0.16531879543241  & 0.16531879543241 &  0.76127471370739 &  0.29102618165375  \\
0.74847573599445  & 0.05241038692400 &  0.29102618165375 &  0.29102618165375  \\
0.05241038692400  & 0.74847573599445 &  0.29102618165375 &  0.29102618165375  \\
\hline
 \multicolumn{4}{|c|}{$w_1=0.03771636146294,\ w_2=0.03938992292057,\ w_3=0.02874740384082,$}\\
\multicolumn{4}{|c|}{$w_4=0.04835888054269,\ w_5=0.03167997303102,\
w_6=0.04542631135249,$}\\ \multicolumn{4}{|c|}{$\
w_7=0.03855314219176,\
0.03855314219176.$}    \\
\hline
\end{tabular}
  \caption{Nodes and weights for $S_4$}\label{tab:s41}
\end{table}

\begin{table}[h]
\centering
\begin{tabular}{|cccc|}
  \hline
  $x_1$ & $x_2$ & $x_3$ & $x_4$  \\ \hline
0.49819378497585  & 0.49819378497585 &  0.49819378497585 &  0.49819378497585  \\
0.15640817934597  & 0.15640817934597 &  0.15640817934597 &  0.15640817934597  \\
0.46048445804733  & 0.46048445804733 &  0.46048445804733 &  0.00148511203764  \\
0.21848509706654  & 0.21848509706654 &  0.21848509706654 &  0.72748319509617  \\
0.54494615070832  & 0.54494615070832 &  0.00202000298217 &  0.29102618165375  \\
0.16998718727254  & 0.16998718727254 &  0.75193793030368 &  0.29102618165375  \\
0.79451246595922  & 0.00637365695922 &  0.29102618165375 &  0.29102618165375  \\
0.00637365695922  & 0.79451246595922 &  0.29102618165375 &  0.29102618165375  \\
\hline
 \multicolumn{4}{|c|}{$w_1=0.02857087469598,\ w_2=0.03311415281083,\ w_3=0.04213154579078,$}\\
\multicolumn{4}{|c|}{$w_4=0.05887768675432,\ w_5=0.03842850515254,\
w_6=0.04715947052132,$}\\ \multicolumn{4}{|c|}{$\
w_7=0.03855314219176,\
w_8=0.03855314219176.$}    \\
\hline
\end{tabular}
  \caption{Nodes and weights for $S_4$}\label{tab:s42}
\end{table}

\section{Conclusion}
In this paper, we present a method to construct third-degree
formulae for integrals with permutation symmetry. Our method is a
decomposition method, which is easy to compute. At the end of the
paper, we present some numerical results, which seem to be new.
Compared with the existing method, we focus on the case of
permutation symmetry, which seldom is considered. The numerical
results show that the number of the points attain of close to the
minimum. Besides, in most cases, the weights in our formulas are all
positive or at most one negative weight.

\end{document}